\numberwithin{equation}{section}
\newtheorem{theorem}[equation]{Theorem} 
\newtheorem{proposition}[equation]{Proposition}
\newtheorem{lemma}[equation]{Lemma}
\newtheorem{conjecture}{Conjecture}
\newtheorem{definition}[equation]{Definition}
\theoremstyle{definition}
\newtheorem{notation}{Notation}
\theoremstyle{remark}
\newtheorem{remark}[equation]{Remark}
\newtheorem{example}{Example}
\newtheorem{question}{Question}
\DeclareMathOperator {\energy} {energy}
\DeclareMathOperator{\image}{\mathrm{image}}
\DeclareMathOperator{\lcs}{lcs}
\begin{document}
% \href{http://yashamon.github.io/web2/papers/conformalsymplectic.pdf}{Direct link to author's version}
\title{Locally conformally symplectic deformation of Gromov non-squeezing}
\author{Yasha Savelyev}
\thanks {Partially supported by PRODEP grant}
\email{yasha.savelyev@gmail.com}
\address{University of Colima, CUICBAS}
\keywords{locally conformally symplectic manifolds, conformal symplectic non-squeezing, Gromov-Witten theory, virtual fundamental class}
\subjclass[2000]{53D45}
\begin{abstract} We prove one deformation theoretic extension of the Gromov non-squeezing phenomenon to $\lcs$ structures, or locally conformally symplectic structures, which suitably generalize both symplectic and contact structures. 
We also conjecture an analogue in $\lcs$ geometry of contact non-squeezing of
Eliashberg-Polterovich and discuss other related questions.
\end{abstract}
 \maketitle
 \section {Introduction}
We study here some analogues of Gromov non-squeezing
for locally conformally symplectic manifolds, which generalize both symplectic and contact manifolds. Let us recall the definition.
\begin{definition}
A \textbf{\emph{locally conformally symplectic manifold}} or \textbf{\emph{lcs manifold}} is a smooth $2n$-fold $M$, with a $\lcs$ structure: a non-degenerate 2-form $\omega$, with the property that for every $p \in M$ there is an open $U \ni p$ such that $\omega| _{U} = f _{U} \cdot \omega _{U} $, for some symplectic form $\omega _{U} $ defined on $U$ and some smooth positive function $f _{U} $ on $U$. In the case of our paper we always have $n \geq 2$, as in case $n=1$  there are other candidates for what should be an $\lcs$ structure.
\end{definition}

These structures have recently come into focus,
for example we have a fascinating recent theorem
of Apostolov-Dloussky \cite{citeApostolovStructures} that every complex surface with an odd first Betti number admits a
natural compatible $\lcs$ structure.  Without compatibility,
a more general existence result of this form is in
Eliashberg-Murphy~\cite{citeEliashbergMurphyMakingcobordisms}. 
 
A  basic invariant of a $\lcs$ structure $\omega$
is the Lee class, $$\alpha = \alpha _{\omega}  \in
H ^{1} (M, \mathbb{R}), $$  which we now briefly
describe.

The class $\alpha$ has the following differential 
form representative, called the Lee form and also 
denoted by $\alpha$ for simplicity.  If $U$ is an 
open set so that $\omega| _{U} = f _{U} \cdot 
\omega _{U}   $ for $\omega _{U}  $ symplectic, 
and $f _{U} $ a positive smooth function, then 
$\alpha = d (\ln f _{U} )$ on $U$. By a simple 
calculation this can be seen to give well-defined 1-form
$\alpha$, see also Lee \cite{citeLee}. The class $\alpha$ has the property that on the associated $\alpha$-covering space $\widetilde{M} $,  the lift $\widetilde{\omega} $ is globally conformally symplectic, that is $\widetilde{\omega} = f \cdot \omega _{0} $ with $\omega _{0} $ symplectic and $f>0$.
 By $\alpha$-covering space we mean the covering space associated to the normal subgroup $\ker  \langle \alpha,  \cdot \rangle  \subset \pi _{1} (M, x) $,
 where $\langle \alpha, \cdot \rangle: \pi _{1}(M,x) \to
 \mathbb{R}$ is the homomorphism $$[\gamma] \mapsto  \langle
 \alpha, [\gamma]  \rangle = \int _{S ^{1}} \gamma ^{*}
 \alpha. $$

It is moreover immediate that for a $\lcs$ form
$\omega$ $$d\omega= \alpha \wedge \omega,$$ for
$\alpha$ the Lee form as defined above. For some
authors, the pair $(\omega, \alpha)$ with $\alpha$
closed s.t. $d\omega= \alpha \wedge \omega$ is the
definition of a $\lcs$ structure. This has the
advantage of being interesting even in dimension
$2$, but in dimension at least $4$ the Lee form is
uniquely determined, so that there is no
difference of our definition with this second definition.

Let $\alpha$ be a closed 1-form on a smooth
manifold $M$.  The operator $$d ^{\alpha}: \Omega ^{k} (M) \to \Omega ^{k+1} (M),   $$ $$d ^{\alpha} (\eta) = d \eta - \alpha \wedge \eta$$ is called the Lichnerowicz differential. 
It satisfies $$d ^{\alpha} \circ d ^{\alpha} =0
$$ so that we have an associated chain
complex called the \textbf{\emph{Lichnerowicz chain complex}}.
The following is one basic example of an $\lcs$ manifold.
 \begin{example} [Banyaga] \label{example:banyaga} Let $(C, \lambda)
    $ be a contact $(2n+1)$-manifold where 
    $\lambda$ is a contact form: $$\forall p \in C: \lambda
		\wedge \lambda ^{2n} (p) \neq 0. $$ Take $M=C \times
		S ^{1}  $ with the 2-form $$\omega _{\lambda} = d
 ^{\alpha} 
    \lambda$$ for $\alpha: = pr _{S ^{1} } ^{*} d\theta   $, 
 $pr _{S ^{1} }: C \times S ^{1} \to S ^{1}  $ the projection,  and $\lambda$ likewise the pull-back of $\lambda$ by the projection $C \times S ^{1} \to C $. We call $(M,\omega _{\lambda} )$  as above the \textbf{\emph{lcs-fication}} of $(C,\lambda)$.
 \end{example}

\subsection{Symplectic and $\lcs$ non-squeezing}
Gromov's famous non squeezing theorem
~\cite{citeGromovPseudoholomorphiccurvesinsymplecticmanifolds.}, says
the following. Let $\omega _{st} = \sum _{i=1} ^{n} dp _{i} \wedge dq _{i}$
denote the standard symplectic form on $\mathbb{R} ^{2n} $,
$B _{R} $ the standard closed radius $R$ ball in
$\mathbb{R} ^{2n} $ centered at $0$, and $D ^{2} _{r}
\subset \mathbb{R} ^{2} $ the
standard radius $r$ disc.
Then for $R>r$,  there does not exist a symplectic embedding $$(B _{R}, \omega _{st})
\hookrightarrow (D ^{2}_{r} \times \mathbb{R} ^{2n-2}, \omega
_{st} \oplus \omega _{st}).
$$  

Gromov's non-squeezing is $C ^{0} $ persistent in the
following sense. The proof of this is subsumed by the proof
of Theorem \ref{cor:nonsqueezing} stated in Section
\ref{section:mainargument}.  

\begin{theorem}  \label{thm:Gromov} Let $R>r>0$ be given, and
let $\omega$ be the standard product symplectic form on $M=S ^{2} \times T ^{2n-2}  $,  satisfying $$ \langle \omega,
A  \rangle =  \pi r ^{2}, A=[S ^{2} ] \otimes [pt] \in
H _{2} (M) ,  $$  (for $\langle ,  \rangle $ the usual
pairing of homology and cohomology classes).
Then for any symplectic form
$\omega' $ on $M=S ^{2} \times T ^{2n-2}  $,  sufficiently $C ^{0}
$ close to $\omega $ there is
no symplectic embedding $\phi: B _{R} \hookrightarrow (M,
\omega')$, meaning that $\phi ^{*} (\omega ') = \omega
_{st}$.
\end{theorem}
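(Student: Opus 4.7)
The plan is to follow Gromov's original pseudoholomorphic curve proof, reorganized so that the only places $\omega'$ enters are through two quantities that depend $C^0$-continuously on it: non-degeneracy (of $\omega'$ and of the interpolating forms), and the homological area $\langle \omega', A\rangle$. Assume for contradiction that a symplectic embedding $\phi:(B_R,\omega_{st})\hookrightarrow (M,\omega')$ exists for some $\omega'$ very $C^0$-close to $\omega$.

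First I would choose an $\omega'$-tamed almost complex structure $J'$ on $M$ whose restriction to $\phi(B_R)$ equals $\phi_*J_{st}$, with $J_{st}$ the standard integrable structure on $\mathbb{R}^{2n}$; this extension exists because the space of $\omega'$-tamed almost complex structures is non-empty and contractible. Then I would produce a $J'$-holomorphic sphere $u:S^2\to M$ in the class $A=[S^2]\otimes[pt]$ passing through the point $\phi(0)$. For the unperturbed product form $\omega$ the moduli space of $\omega$-tamed pseudoholomorphic spheres in class $A$ through a point is non-empty with a well-defined non-zero Gromov--Witten count, witnessed for the standard split $J$ by the obvious horizontal spheres $S^2\times\{pt\}$. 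Because non-degeneracy is $C^0$-open, for $\omega'$ sufficiently $C^0$-close to $\omega$ the straight-line path $\omega_t=(1-t)\omega+t\omega'$ is an isotopy of symplectic forms, and deformation invariance of the Gromov--Witten invariant yields the desired $J'$-holomorphic sphere in $(M,\omega')$ through $\phi(0)$.

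Next I would apply the monotonicity lemma. Pulling back via $\phi^{-1}$ the portion of $u$ landing in $\phi(B_R)$ gives a $J_{st}$-holomorphic curve in $B_R\subset\mathbb{C}^n$ through the origin, hence a minimal surface for the Euclidean metric; its Euclidean area --- equal to the $\omega_{st}$-area of that portion, which equals the $\omega'$-area of the corresponding piece of $u$ --- is therefore at least $\pi R^2$. But the total $\omega'$-area of $u$ equals $\langle \omega',A\rangle$, and this quantity depends linearly on $\omega'$ in $C^0$ norm, with value $\pi r^2$ at $\omega'=\omega$. For $\omega'$ $C^0$-close enough one has $\langle\omega',A\rangle<\pi R^2$, contradicting the monotonicity bound.

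The main obstacle, and essentially the only non-bookkeeping step, is deformation invariance: producing the $J'$-holomorphic sphere through $\phi(0)$ for a generic symplectic $\omega'$ that is merely $C^0$-close to $\omega$ rather than smoothly close. Classically this is handled via semi-positivity of $(S^2\times T^{2n-2},\omega)$, which rules out harmful sphere bubbling in the relevant moduli space so that the GW number can be read off from any generic tamed almost complex structure; alternatively one invokes the virtual fundamental class machinery developed elsewhere in the paper. Once the sphere exists, the contradiction is a pure area comparison which is genuinely $C^0$-stable because monotonicity is carried out inside the image of the ball, where by construction the geometry is standard.
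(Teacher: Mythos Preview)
Your proposal is correct and matches the paper's own proof essentially point for point: the paper likewise takes the linear interpolation $\omega_t=(1-t)\omega+t\omega'$ (symplectic for $\epsilon$ small), extends $\phi_*j$ to a compatible $J_1$, uses deformation invariance of the Gromov--Witten count in class $A$ to produce a sphere through $\phi(0)$, and finishes with the monotonicity contradiction. The only cosmetic difference is that the paper frames this as a degenerate instance of its more general $\mathrm{lcs}$ argument (Theorem~\ref{cor:nonsqueezing}), where energy bounds require lifting to the universal cover; in the purely symplectic case those bounds collapse to $\langle\omega_t,A\rangle$, exactly as you compute directly.
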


On the other hand it is natural to ask if the above theorem continues to hold for general nearby
forms. Or formally this translates to: 
 \begin{question} \label{thm:nonrigidity} Let $R>r>0$ be
 given, and let $\omega $ be the product symplectic form as
 above, satisfying $ \langle \omega, A  \rangle = \pi r ^{2}
 $. For every $\epsilon > 0 $ is there a (necessarily
 non-closed by above) 2-form $\omega'$ on $S ^{2} \times
 T ^{2n-2}  $,  $C ^{0} $ or even $C ^{\infty}
 $ $\epsilon$-close to the symplectic form $\omega $ and
 such that there is an embedding $$\phi: B _{R}
 \hookrightarrow S ^{2} \times T ^{2n-2},   $$ with $\phi ^{*}\omega'=\omega _{st}  $? We likewise call such a map $\phi$ \textbf{\emph{symplectic embedding}}. 
 \end{question}
We cannot reduce this question to just applying Theorem
\ref{thm:Gromov}. This is because:
\begin{enumerate}
	\item A symplectic form on a subdomain of the form $\phi
(B _{R}) \subset M$  may not extend to
a symplectic form on $M$ (even if $M$ has a
symplectic form!).
\item When an extension to a symplectic form on $M$ does
exist, it may not be $C ^{0}$-close to a product form $\omega$
of the form above.
\end{enumerate}
This appears to be a very difficult question, my opinion is that at least in the $C ^{0} $ case the answer is yes, in part because it is difficult to imagine any obstruction, for example we no longer have Gromov-Witten theory for such a general $\omega'$.

We will we show that if $\omega'$ is $\lcs$ then the answer
to the above question is no, in the $C ^{1}$ case, under a mild additional condition.

One may think that recent work of 
M\"uller \cite{citeMuller} may be related to the 
present discussion. But there seems to be no 
obvious such relation as pull-backs by 
diffeomorphisms of nearby forms may not be nearby. 
Hence, there is no way to go from nearby embeddings that we work with to $\epsilon$-symplectic embeddings of M\"uller.

The following theorem is a more elementary precursor to Theorem \ref{cor:nonsqueezing}. 
\begin{theorem} \label{cor:nonsqueezingintro}  Let 
   $\omega$ be the standard symplectic form on $M =S
   ^{2} \times T ^{2n-2}  $ as above, s.t. for $A$ as above
   $ \langle \omega, A\rangle = \pi r ^{2} $. 
   There is a full volume open subspace $U  \subset M$, 
	 meaning that $vol _{g} (U) = vol _{g} (M)  $ with respect
	 to any Riemannian metric $g$, and with
	 $U$ diffeomorphic to $S ^{2} \times \mathbb{R} ^{2n-2} $, 
	 such that the following holds. Let $R>r$ be given. There exists an $\epsilon >0$ s.t. if
   $\{\omega _{t} \} $, $t \in [0,1]$, $\omega
   _{0} = \omega$ is a $C ^{1}$ continuous family of $\lcs$
	 forms on $M$, with $d _{C ^{1}}  (\omega _{t}, \omega _{0}) < \epsilon$ for all $t$, then  there is no symplectic embedding $$\phi: (B _{R}, \omega _{st}) \hookrightarrow U, $$ meaning an embedding $\phi$ such that $\phi ^{*} \omega _{1} = \omega _{st} $.  
\end{theorem}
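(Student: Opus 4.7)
\begin{pfs}
The plan is to adapt Gromov's pseudoholomorphic curve argument to the $\lcs$ setting, exploiting that $\omega_1$ is $C^0$-close to the genuine symplectic form $\omega_0$. The key initial observation is cohomological: if $\phi^*\omega_1=\omega_{st}$, then $\phi^*\omega_1$ is closed, so $\phi^*(\alpha_1\wedge\omega_1)=(\phi^*\alpha_1)\wedge\omega_{st}=0$. Since $2n\geq 4$ and $\omega_{st}$ is symplectic, wedging with $\omega_{st}$ is injective on $1$-forms (hard Lefschetz at the linear level), giving $\phi^*\alpha_1=0$; thus the Lee form pulls back trivially to the embedded ball.

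Take $U:=S^2\times (0,1)^{2n-2}$ under the standard chart $T^{2n-2}=\mathbb{R}^{2n-2}/\mathbb{Z}^{2n-2}$; it is simply connected, of full volume, and diffeomorphic to $S^2\times\mathbb{R}^{2n-2}$. Suppose for contradiction a symplectic embedding $\phi:(B_R,\omega_{st})\hookrightarrow U$ with $\phi^*\omega_1=\omega_{st}$ exists. Choose an almost complex structure $J$ on $M$ tamed by $\omega_0$ and satisfying $d\phi\circ J_{st}=J\circ d\phi$ on $B_R$: since $\omega_{st}$ tames $J_{st}$ and $\omega_0$ is $C^0$-close to $\omega_1$, the form $\omega_0$ tames $\phi_*J_{st}$ on $\phi(B_R)$; extend off $\phi(B_R)$ by convexity of the space of $\omega_0$-tame $J$'s. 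Because each $\omega_t$ is $C^0$-close to $\omega_0$, this $J$ is tamed by every $\omega_t$.

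By standard Gromov--Witten theory for the product $(M,\omega_0)$ the class $A$ has invariant $1$ (realized by the fibre spheres $S^2\times\{t\}$ for the product complex structure $J_0$), and bubbling in class $A$ is excluded since $\pi_2(M)=\mathbb{Z}\cdot A$ forces any splitting $A=A_1+A_2$ into spherical classes to contain a summand of non-positive $\omega_0$-area. Hence, after a generic perturbation of $J$ away from $\phi(B_R)$, there is a $J$-holomorphic sphere $u:S^2\to M$ in class $A$ with $u(z_0)=\phi(0)$. By $\omega_0$-tameness there is $C>0$ with $\area_g(u)\leq C\int u^*\omega_0=C\pi r^2$ for $g:=\omega_0(\cdot,J\cdot)$, so
\[
\int_{S^2}u^*\omega_1\leq \int_{S^2}u^*\omega_0+\|\omega_1-\omega_0\|_{C^0}\cdot \area_g(u)\leq \pi r^2(1+C\epsilon).
\]

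Finally apply Lelong monotonicity: $v:=\phi^{-1}\circ u$ on $u^{-1}(\phi(B_R))$ is a proper $J_{st}$-holomorphic curve in $B_R$ through the origin, so $\int v^*\omega_{st}\geq \pi R^2$. Using $\phi^*\omega_1=\omega_{st}$ and $\omega_1$-tameness of $J$ (to drop the complementary integral),
\[
\pi R^2\leq\int v^*\omega_{st}=\int_{u^{-1}(\phi(B_R))}u^*\omega_1\leq \int_{S^2}u^*\omega_1\leq \pi r^2(1+C\epsilon),
\]
contradicting $R>r$ once $\epsilon<(R^2-r^2)/(Cr^2)$. The main obstacle is guaranteeing existence and transversality of the $J$-holomorphic sphere through $\phi(0)$ under the rigid constraint $J|_{\phi(B_R)}=\phi_*J_{st}$; this is handled by deformation invariance of the Gromov--Witten count, anchored at the explicit product pair $(\omega_0,J_0)$.
\end{pfs}
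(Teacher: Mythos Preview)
Your argument has a genuine circularity that the paper's proof is specifically designed to avoid. The problem is the step ``since $\omega_{st}$ tames $J_{st}$ and $\omega_0$ is $C^0$-close to $\omega_1$, the form $\omega_0$ tames $\phi_*J_{st}$ on $\phi(B_R)$''. Write $J=\phi_*J_{st}$. For $w\in T_xB_R$ one has
\[
\omega_0(\phi_*w,J\phi_*w)=|w|_{st}^2+(\omega_0-\omega_1)(\phi_*w,\phi_*J_{st}w),
\]
and the error is bounded only by $\epsilon\,|\phi_*w|_h\,|\phi_*(J_{st}w)|_h$ for the \emph{fixed} background metric $h$ used in the hypothesis. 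Since $\phi$ is constrained only by $\phi^*\omega_1=\omega_{st}$, the operator norm of $d\phi$ with respect to $h$ is completely uncontrolled, so no $\phi$-independent $\epsilon$ makes this positive. The same issue recurs downstream: with $g:=\omega_0(\cdot,J\cdot)$ you do get $\area_g(u)=\pi r^2$, but the $\|\omega_1-\omega_0\|_{C^0}$ appearing in your energy estimate is then the $C^0$-norm relative to $g$, not to $h$, and converting the hypothesis $d_{C^1}(\omega_t,\omega_0)<\epsilon$ into a $g$-norm bound reintroduces constants depending on $J$, hence on $\phi$. Your final threshold $\epsilon<(R^2-r^2)/(Cr^2)$ is therefore not a condition on $\epsilon$ alone.

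The paper sidesteps this entirely by never asking $\omega_0$ to tame the pushed-forward $J$. It takes $J_1$ compatible with $\omega_1$ (extending $\phi_*j$) and bounds the $\omega_1$-energy of a class-$A$ curve \emph{without} comparing to $\omega_0$-area: one lifts $u$ to a fixed fundamental domain $K\subset\widetilde M$ (Lemma~\ref{lemma:K}, which is why the hypersurfaces $\Sigma_i$ and the set $U$ enter), writes $\widetilde\omega_1=f_1\,\widetilde\omega_1^{symp}$, and uses that $\max_K f_1$ is close to $1$ and $\langle\widetilde\omega_1^{symp},A\rangle$ is close to $\pi r^2$. Both bounds depend only on the family $\{\omega_t\}$ and on $K$; the $C^1$-hypothesis is used precisely to make the Lee form $\alpha_1$ $C^0$-small so that $f_1\approx 1$ on $K$ (Proposition~\ref{prop:C1deformation} and Lemma~\ref{lemma:lcsconvergence}). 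This is the essential new input over the symplectic case, and it is what your sketch is missing.
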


\begin{remark} \label{remark_epsilon}
% TODO: {added remark epsilon}
In general we cannot give a formula for $\epsilon $ in terms
of $R,r$. However, in case of Theorem \ref{thm:Gromov} the
condition on $\epsilon $ can be deduced from the proof to be $\epsilon \cdot \pi _{} r ^{2} < \pi
_{} (R ^{2} - r	^{2}) $, (as intuitively expected)  provided we use the standard Kahler
metric on $S ^{2} \times T ^{2n-2}$ with the symplectic form
$\omega $.
\end{remark}

\begin{remark} \label{remark_U}
It is natural to ask if we can directly formulate
a version of the theorem for $U$, which is described 
explicitly in Theorem \ref{cor:nonsqueezing}. The main issue
is that an lcs form on $U$ may not have	a suitable lcs extension to $S ^{2} \times T ^{2n-2}$. The
extension is needed by us for Gromov compactness type
considerations. So that at least the theorem above, or the
Theorem \ref{cor:nonsqueezing}, does not
a priori say anything in this case. 
On the other had, if we try to work on $U$, then we can reduce
to the case of symplectic forms as any lcs form on a simply
connected space is symplectic up to a multiple by a non-zero
function. However, in this case there are other interesting
difficulties, and only in dimension 4 it is clear how to
surmount them see  ~\cite{citeSavelyevNearbyGromov}. 
\end{remark}

We shall see in Theorem \ref{cor:nonsqueezing} that $U$ can
be taken to be $M$, provided $\phi$ satisfies a certain mild
complex linearity condition on its differential, whenever it
intersects a fixed real co-dimension 2 hypersurface in $M$, of a certain
kind.  The $C ^{1}$ continuity is used to establish energy controls for certain pseudo-holomorphic
curves, as Gromov-Witten theory behaves very
differently in $\lcs$ setting. This is relaxed in
Theorem \ref{cor:nonsqueezing} to certain
$\mathcal{T}  ^{0}$ continuity, close to $C
^{0}$ continuity. Relaxing this further to $C ^{0}$
continuity would probably require substantially new ideas.

Note that Frechet smooth $\lcs$ deformations
$\{\omega _{t} \} $ of our symplectic form $\omega$, with Lee forms $\alpha _{t} $ likewise smoothly varying in $t$, are obstructed unless $\alpha _{t}$ are DeRham exact, as pointed out to me by Kevin Sackel. This can be verified by an elementary calculation by taking the $t$ derivative at $0$ of the equation: 
\begin{equation*}
d ^{\alpha} \omega _{t} = \alpha _{t} \wedge \omega _{t}.
\end{equation*}
But our families are not required to be smooth so that non-trivial $\lcs$ deformations of a symplectic form may exist.   
This motivates the question:
\begin{question} Do there exist (continuous)
   $\lcs$ deformations $ \{(\omega _{t}, \alpha
   _{t}) \}$ of the standard product symplectic
   form on $S ^{2} \times T ^{2n-2}  $, $\alpha
   _{t}$ the Lee form of $\omega _{t}$,   so that $\alpha _{t} $ are not DeRham exact?
\end{question}
\begin{remark} \label{remark:}
Another direction for the future is to consider ``lcs deformation
of Gromov non-squeezing'' (in the sense of the theorem
above)  for symplectic manifolds $(M, \omega)$ (with finite
Gromov width) satisfying:
\begin{itemize}
	\item $\wedge:
H ^{1} (M, \mathbb{R} ^{} ) \otimes H ^{2} (M, \mathbb{R}
^{} ) \to H ^{3} (M, \mathbb{R} ^{} )$ is the zero map.
\item $H ^{1} (M, \mathbb{R} ^{}
) \neq 0 $.
\end{itemize}
In this case, the obstruction to non-exact lcs
deformation vanishes. Of course the above assumption is very
strong and weaker assumptions would suffice. We do not carry
out this idea here, as finding appropriate examples is an
interesting a problem by itself, and we would also need new Gromov-Witten
theory computations which might be outside our scope. However,
the essential strategy should be the same.
\end{remark}
\subsubsection {Toward direct generalization of contact non-squeezing} \label{section:contactnonsqueezing} 

The Eliashberg-Kim-Polterovich contact non-squeezing theorem as stated 
by Fraser ~\cite{citeFraserNonsqueezing} has the following form. 
Let $C = R ^{2(n-1)}  \times S ^{1}  $, $S ^{1} =
\mathbb{R} ^{}/\mathbb{Z}  $,  be the
prequantization space of $R ^{2n-2} $, or in other
words the contact manifold with the contact form
$d\theta - \lambda$, for $\lambda =
\frac{1}{2}(ydx - xdy)$. 
Let $B _{R} $ denote the open radius $R$ ball in
$\mathbb{R} ^{2n-2} $, and $\overline{B} _{R}$ its
topological closure. 
\begin{theorem}
   [Eliashberg-Kim-Polterovich~\cite{citeEKPcontactnonsqueezing},
   Fraser~\cite{citeFraserNonsqueezing},
   Chiu~\cite{citeChiuNonsqueezing}] 
   \label{thm:contactnonsqueezing}
For $R \geq 1$ there is no
contactomorphism  $\phi: C \to C$, isotopic to the identity, so that  $\phi
(\overline{B} _{R} \times S ^{1}) \subset  B _{R}
\times S ^{1} $.
\end{theorem}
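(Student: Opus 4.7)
\begin{pfs}
The plan is to follow the symplectization strategy of Eliashberg-Kim-Polterovich: build a contact-Floer invariant of compact domains $K\subset C$ that is preserved up to canonical filtered isomorphism under any contactomorphism of $C$ isotopic to the identity, and then compute enough of this invariant for $K=\overline{B}_R\times S^1$ to rule out $\phi(K)\subset B_R\times S^1$ when $R\geq 1$.

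First I would pass to the symplectization $SC=C\times \R_{>0}$, with symplectic form $d(s(d\theta-\lambda))$ and $s$ the $\R_{>0}$ coordinate, noting that any contact isotopy of $C$ lifts uniquely to an $s$-equivariant Hamiltonian isotopy of $SC$.  To a compact $K\subset C$ I would associate a persistence-module-type complex $\cC_{\bullet}(K)$ built from a Rabinowitz-Floer or relative symplectic cohomology construction whose generators are closed Reeb orbits of $\partial_\theta$ lying over $K$ together with short Hamiltonian chords on $\del K$.  Each generator carries a natural action filtration coming from $s(d\theta-\lambda)$, and the standard Floer continuation argument, applied to the Hamiltonian lift of an isotopy from $\id$ to $\phi$, produces a filtered quasi-isomorphism $\cC_{\bullet}(\phi(K))\cong \cC_{\bullet}(K)$.

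Second I would exploit the prequantization structure of $C$ to compute $\cC_{\bullet}(\overline{B}_R\times S^1)$.  The closed Reeb orbits over a point $p\in \R^{2n-2}$ are the periodic fibers $\{p\}\times S^1$ of integer period, and the hypothesis $R\geq 1$ is the arithmetic condition ensuring that the primitive Reeb period is small enough relative to the symplectic area swept out by $B_R$ that a genuine persistence obstruction survives.  This coincidence forces a nonzero class in $\cC_{\bullet}(\overline{B}_R\times S^1)$ pinned at an action level associated to an orbit over $\del B_R$; monotonicity of the module under domain inclusion then implies that a contactomorphic image $\phi(\overline{B}_R\times S^1)\subset B_R\times S^1$ would force the corresponding class in $\cC_{\bullet}(B_R\times S^1)$ to have strictly smaller action, contradicting the filtered isomorphism supplied by continuation.

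The hard part is constructing $\cC_{\bullet}$ with the right compactness, transversality, and naturality properties.  The base $\R^{2n-2}$ is non-compact, the relevant closed Reeb orbits occur in Morse-Bott families parametrized by $\R^{2n-2}$, and Floer trajectories can a priori escape to infinity or accumulate on arbitrarily high multiple covers of the fiber; one needs a careful class of admissible Hamiltonians, a Morse-Bott perturbation scheme, and $s$-equivariant cofinality arguments to make the persistence module well defined.  In the literature these analytic difficulties are handled in three broadly different but ultimately equivalent ways: generating function selectors in the style of Eliashberg-Kim-Polterovich, equivariant Rabinowitz-Floer homology, and microlocal sheaves on $C\times \R$ in the style of Chiu and of Fraser.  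Any of these frameworks supplies the required invariant, at the cost of substantial analytic or categorical machinery, which is the main obstacle to a shorter, self-contained proof.
\end{pfs}
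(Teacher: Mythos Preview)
The paper does not prove this theorem; it is quoted, with attribution to \cite{citeEKPcontactnonsqueezing,citeFraserNonsqueezing,citeChiuNonsqueezing}, purely as background motivating Conjecture~\ref{q:contactnonsqueezing}. There is therefore no proof in the paper to compare your sketch against.

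As an outline of the literature your sketch is broadly reasonable: the ingredients you list---passing to the symplectization, building a filtered invariant of domains that is natural under contact isotopies, and computing it on $\overline{B}_R\times S^1$ via the integer-period Reeb fibers---are indeed the conceptual core. A couple of attributions are off, though. Eliashberg--Kim--Polterovich do not construct a Rabinowitz-Floer or relative-SH type complex; their argument goes through orderability of $\widetilde{\mathrm{Cont}}_0$ and cylindrical contact homology/generating functions. Fraser's extension to all $R\ge 1$ is likewise generating-function based, not sheaf-theoretic; it is Chiu who uses microlocal sheaves on $C\times\mathbb{R}$. So your final paragraph, which lumps Fraser with the sheaf approach, should be corrected. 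The persistence-module/Rabinowitz-Floer packaging you describe is a legitimate modern reformulation (closer in spirit to work of Albers--Merry or Sandon), but it is not literally what any of the three cited references do.
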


A Hamiltonian conformal symplectomorphism of an
$\lcs$ manifold $(M,\omega)$, which we just
abbreviate by the short name:
\textbf{\emph{Hamiltonian lcs map}}, is a
$\lcs$ diffeomorphism $\phi _{H} $ generated
analogously to the symplectic case  by a smooth
function $H: M \times [0,1] \to \mathbb{R} $.
Specifically, we define the time dependent vector
field $X _{t} $  by:
\begin{equation*}
\omega (X _{t}, \cdot) = d ^{\alpha} H _{t},
\end{equation*}
for $\alpha$ the Lee form, and then taking $\phi
_{H} $ to be the time 1 flow map of $\{X _{t} \}$.
For example, let  $(C \times S ^{1}, \omega
_{\lambda}) $  be the $\lcs$-fication of a contact
manifold $(C,
\lambda)$ as above.

If $\forall t: H _{t} =-1$ then $d ^{\alpha} (H
_{t}) = \alpha$ and clearly $$X _{t} = (R
^{\lambda} \oplus 0),$$  as a section of $TC
\oplus TS ^{1}$  with $R ^{\lambda} $ the
$\lambda$-Reeb vector field. The latter is the vector field
defined by:
$$ d\lambda (R ^{\lambda}, \cdot ) = 0, 
\quad \lambda (R ^{\lambda}) = 1.$$
Thus, in this case the
associated flow is naturally induced by the Reeb
flow.   More generally, given a smooth contact isotopy
$\{\phi _{t}\}$, $\phi _{t}: C \to C$
contactomorphism of a closed contact
manifold $C$, s.t. $\phi _{0} =id$,  there is a similarly induced
Hamiltonian isotopy $\{\widetilde{\phi} _{t}\} $
on the lcs-fication $C \times S ^{1}$, s.t. $\{pr
_{C} \circ \widetilde{\phi} _{t}\} = \{\phi
_{t}\}$, for $pr_C: C \times S ^{1} \to C$ the
projection. 
 This is
left as an exercise for the reader. Thus, the
following conjecture is a direct generalization of the
contact non-squeezing Theorem \ref{thm:contactnonsqueezing}.
\begin{conjecture} [see also Oh-Savelyev
   ~\cite{citeSavelyevOh}]  \label{q:contactnonsqueezing} If $R
   \geq 1$ there is no compactly supported,
   Hamiltonian lcs map $$\phi: \mathbb{R} ^{2n} \times S ^{1} \times S ^{1} \to \mathbb{R} ^{2n} \times S ^{1} \times S ^{1},  $$ so that $\phi (\overline{U} ) \subset U$, for $U := B _{R} \times S ^{1} \times S ^{1}  $ and $\overline{U} $ the topological closure. 
  \end{conjecture}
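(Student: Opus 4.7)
The plan is to adapt, to the $\lcs$ setting, the pseudo-holomorphic-curve and Floer-theoretic invariants underlying the known proofs of contact non-squeezing (Eliashberg-Kim-Polterovich, Fraser, Chiu), combined with the $\lcs$ Gromov-Witten framework developed in this paper. A direct reduction to the contact non-squeezing theorem is not available: only Hamiltonian lcs maps generated by $S^1$-invariant Hamiltonians of the special form exhibited in Example \ref{example:banyaga} descend to contact isotopies of the prequantization base $C = \mathbb{R}^{2n} \times S^1$, whereas a general compactly supported $\phi$ does not.

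The first step is to pass to the $\alpha$-cover $\widetilde{M} = \mathbb{R}^{2n} \times S^1 \times \mathbb{R}$, on which the pulled-back lcs form $\widetilde{\omega}_\lambda$ is globally conformally equivalent to the symplectic form of the symplectization $(\mathbb{R} \times C,\, d(e^{-t}\lambda))$ of the prequantization contact manifold. A compactly supported Hamiltonian lcs map $\phi$ lifts to a $\mathbb{Z}$-equivariant conformal symplectomorphism $\widetilde{\phi}$ of this symplectization, whose conformal factor $e^{\int_0^1 \alpha(X_s)\, ds}$ is determined by integration of the Lee form $\alpha = d\theta$ along flow lines, and is bounded uniformly on the compact support of $\phi$. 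The squeezing hypothesis $\phi(\overline{U}) \subset U$ lifts to $\widetilde{\phi}(\overline{B}_R \times S^1 \times \mathbb{R}) \subset B_R \times S^1 \times \mathbb{R}$.

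The core of the argument is to construct a spectral or homological invariant, preserved under $\mathbb{Z}$-equivariant compactly supported conformal Hamiltonian isotopies, that detects the quantization threshold $R = 1$. Following the Fraser route, one attempts an equivariant symplectic cohomology on a Liouville filling of the symplectization, with action filtrations governed by the closed Reeb orbits on $C$, namely the $S^1$ fibers of the prequantization bundle; their integer period structure produces the $R \geq 1$ threshold, and $\mathbb{Z}$-equivariance of $\widetilde{\phi}$ descends the filtration invariance to an invariant on $M$ itself. Alternatively, following the $\lcs$ Gromov-Witten strategy of Theorem \ref{cor:nonsqueezing}, one compactifies $\mathbb{R}^{2n}$ inside a closed $\lcs$ manifold such as $S^{2n+1} \times S^1$ (the lcs-fication of the standard contact sphere) and uses $\lcs$ Gromov-Witten counts in a suitable non-trivial class to rigidify pseudo-holomorphic curves passing through points of $\overline{B}_R \times S^1 \times S^1$, from which the squeezing relation is ruled out via a capacity estimate on the image.

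The main obstacle is twofold. First, $\widetilde{\phi}$ is conformally, not strictly, symplectic, so the usual invariance of Floer or Gromov-Witten counts under Hamiltonian isotopy must be replaced by a conformal-equivariance statement; action filtrations transform nontrivially under conformal rescaling, which requires a conformally-equivariant reworking of the entire spectral-number package. Second, the relevant moduli spaces live on noncompact manifolds with exponentially varying conformal factor, and unlike the setting of Theorem \ref{cor:nonsqueezing}, where $\mathcal{T}^0$ continuity of the deformation supplied a priori energy bounds, here no smallness hypothesis on $\phi$ is available, so SFT-style neck-stretching and uniform compactness estimates would have to be developed genuinely in the $\lcs$ context. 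These analytic difficulties are the main reason the statement is posed as a conjecture rather than a theorem.
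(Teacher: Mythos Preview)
The statement you are addressing is Conjecture~\ref{q:contactnonsqueezing}, and the paper does not prove it; it is explicitly posed as an open problem, motivated by the observation that contact isotopies on $C$ induce Hamiltonian lcs isotopies on the lcs-fication $C \times S^1$, so that the conjecture would strictly generalize Theorem~\ref{thm:contactnonsqueezing}. There is therefore no proof in the paper to compare your proposal against.

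Your write-up is not a proof but a program sketch, and you acknowledge this in your final paragraph. The two approaches you outline are reasonable heuristics, and the obstacles you name are genuine: the conformal, rather than strict, symplectic nature of the lift $\widetilde{\phi}$ breaks the usual action-filtration invariance, and the absence of any smallness hypothesis on $\phi$ removes the mechanism (Proposition~\ref{thm:noSkycatastrophe}) by which the paper obtains energy bounds in the proof of Theorem~\ref{cor:nonsqueezing}. One further issue you do not mention: in your second route, compactifying $\mathbb{R}^{2n}$ inside $S^{2n+1} \times S^1$ produces an lcs manifold with $H_2 = 0$, so there is no nontrivial class $A$ in which to count curves, and the Gromov-Witten strategy of Theorem~\ref{cor:nonsqueezing} has no direct analogue there. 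In any case, since the paper itself offers no proof, your proposal should be read as commentary on why the conjecture is open rather than as a candidate argument to be checked.
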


\section {Topology on the space of $\lcs$ forms
and $J$-holomorphic curves} \label{section:proofs}
Theorem \ref{cor:nonsqueezingintro}  is stated for the standard $C ^{1}$ topology on the space of differential forms. However, this can be
relaxed to use a certain natural  $C ^{0}$ style
   topology $\mathcal{T}_{0}$, specific to $\lcs$
   forms. We will now discuss this. Let $M$ be a closed smooth manifold of dimension at least 4.  The metric topology $\mathcal{T} ^{0} $ on the set $LCS (M)$ of smooth $\lcs$ $2$-forms on $M$ will be defined with respect to the following metric.  
\begin{definition} \label{def:norm} Fix a Riemannian metric $g$ on $M$. For $\omega _{1}, \omega _{2} \in LCS (M)  $ define
\begin{equation*}
d _{0}  (\omega _{1}, \omega _{2}  )= d _{C ^{0}}  (\omega _{1}, \omega _{2} ) + d _{C ^{0} } (\alpha _{1}, \alpha _{2}),
\end{equation*}
for $\alpha _{i} $ the Lee forms of $\omega _{i} $ and $d _{C ^{0} } $ the  usual $C ^{0} $ metric induced by $g$. 
In general $d _{C ^{k}}$ will denote the usual $C ^{k}$
metric, induced by $g$.

\end{definition}  
\begin{proposition}
   \label{prop:C1deformation} The metric $d _{0}$
   on $LCS (M) $ 
   is continuous with respect to the usual $C ^{1}$
   metric.     
\end{proposition}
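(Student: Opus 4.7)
The plan is to exhibit the Lee form $\alpha$ of $\omega$ as an algebraic function of $\omega$ and $d\omega$ at each point, and then to use continuity of this algebraic operation.

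First I would recall the defining identity $d\omega = \alpha \wedge \omega$ for the Lee form, which in dimension $2n \geq 4$ determines $\alpha$ uniquely from $\omega$ (as already noted in the paper's introduction). To make the dependence algebraic, I would wedge both sides with $\omega^{n-2}$ to obtain
\begin{equation*}
d\omega \wedge \omega^{n-2} \;=\; \alpha \wedge \omega^{n-1}.
\end{equation*}
The map
\begin{equation*}
L_\omega : \Omega^1(M) \to \Omega^{2n-1}(M), \qquad \beta \mapsto \beta \wedge \omega^{n-1},
\end{equation*}
is a pointwise linear isomorphism on each fiber whenever $\omega$ is non-degenerate; this is the standard hard Lefschetz isomorphism in top degree and follows immediately from the fact that $\omega^n$ is a nowhere-vanishing volume form. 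Hence we can solve
\begin{equation*}
\alpha \;=\; L_\omega^{-1}\bigl(d\omega \wedge \omega^{n-2}\bigr).
\end{equation*}

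Next I would verify the two continuity inputs. The $C^0$ continuity of the first term $d_{C^0}(\omega_1, \omega_2)$ with respect to $C^1$ (even $C^0$) topology on $LCS(M)$ is immediate. For the second term, note that non-degeneracy is an open condition in the $C^0$ topology on $2$-forms, and on this open set the assignment $\omega \mapsto L_\omega^{-1}$ is continuous in the $C^0$ topology, since at each point it is pointwise matrix inversion applied to a continuously varying invertible linear map. The assignment $\omega \mapsto d\omega \wedge \omega^{n-2}$ is continuous from the $C^1$ topology on $2$-forms to the $C^0$ topology on $(2n-1)$-forms, because wedge product is algebraic and $C^0$-continuous while $\omega \mapsto d\omega$ is continuous from $C^1$ to $C^0$. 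Composing, $\omega \mapsto \alpha_\omega$ is continuous from $C^1$ to $C^0$, so $d_{C^0}(\alpha_1,\alpha_2)$ is controlled by the $C^1$ distance between $\omega_1$ and $\omega_2$.

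Combining the two estimates gives continuity of $d_0$ with respect to $d_{C^1}$. The main (and essentially only) subtle point is justifying the pointwise invertibility of $L_\omega$ in a way that is uniform on $C^0$-small neighborhoods; this is a compactness argument on $M$, using that the operator norm of $L_\omega^{-1}$ depends continuously on $\omega$ and $M$ is closed, so is uniformly bounded on any $C^0$-small neighborhood of a given $\omega$. No deeper input is needed.
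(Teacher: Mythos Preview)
Your proof is correct and shares the paper's overall strategy: express the Lee form $\alpha$ as a pointwise algebraic function of $\omega$ and $d\omega$, then use continuity of the algebraic operations together with continuity of $d$ from $C^1$ to $C^0$. The specific algebraic formula differs. The paper contracts $d\omega$ against the dual bivector $\omega^{-1}$ (the fiberwise inverse of $v \mapsto \omega(v,\cdot)$), obtaining the $1$-form $\eta(v) = (d\omega)(v \wedge \omega^{-1})$ and checking in a Darboux basis that $\eta = (n-1)\alpha$. You instead wedge with $\omega^{n-2}$ and invert the top-degree Lefschetz map $L_\omega:\beta \mapsto \beta \wedge \omega^{n-1}$. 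Both formulas express the same linear-algebraic fact that $\alpha$ is uniquely recoverable from $d\omega = \alpha \wedge \omega$ when $\omega$ is non-degenerate and $n\geq 2$; your version appeals to a more widely quoted isomorphism, while the paper's version avoids any mention of Lefschetz theory and keeps the computation to a single contraction. Neither approach requires anything beyond elementary multilinear algebra and compactness of $M$, and they are of comparable length.
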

\begin{proof}
The following argument was suggested to me by Vestislav
Apostolov. Denote by $\Lambda  (TM) $ the vector bundle
over $M$ with fiber $\Lambda  (TM) _{p}$ over $p$, the
alternating tensor algebra $\Lambda (T _{p}M) $.   Let
$\Lambda ^{2} (TM) $ denote the sub-bundle of degree $2$
elements.  Let $\Phi ^{2} (M) := \Omega  (\Lambda ^{2}
(TM))$ denote the space of $C ^{\infty}$ sections of
$\Lambda ^{2} (TM)$ with $C ^{0}$ topology.   Likewise,
$\Lambda (T ^{*}M) $ will denote the bundle whose fiber
over $p$ is  the alternating tensor algebra $\Lambda
(T_{p} ^{*}M) $.

Let $\Theta ^{2} (M) $ denote the space of non-degenerate $C
^{\infty}$  differential 2-forms on $M$  with $C ^{0}$
topology.   We first construct a continuous map: $$\phi:
\Theta ^{2} (M)  \to \Phi ^{2} (M). $$ Let $\omega$ be
a non-degenerate $2$-form,  so that for each $p \in M$ we
get an isomorphism $i_{\omega}: T _{p}M \to T ^{*} _{p}M$,
$i _{\omega} = \omega (v, \cdot) $. Let $i ^{-1} _{\omega}$
denote the inverse  of this map. Then for each $p \in M$ we
have  a bi-linear form $\omega ^{-1} _{p}  $ on $T ^{*} _{p}
(M) $ defined by $\omega ^{-1} _{p} (\eta, \mu) = \eta (i
^{-1} (\mu))$.   This is readily seen to be skew-symmetric.
Hence, determines a section $\omega ^{-1} \in \Phi ^{2} (M)
$. We then set $\phi (\omega) = \omega ^{-1}$, so that
$\phi$ is  continuous by construction.   

Now for $\omega \in LCS (M) $  define the one-form $\eta$ on
$M$ as follows. Let $v \in T _{p}M$ then $$\eta _{p}   (v)
=(d \omega) _{p} (v \wedge \phi (\omega) _{p}), $$ so that
$v \wedge \phi (\omega) _{p} \in \Lambda ^{3} (T _{p}M)
$ and  $(d \omega) _{p} \in \Lambda ^{3} (T ^{*} _{p}M)
$ identified with a functional in   $(\Lambda ^{3} (T
_{p}M)) ^{*}$. Taking a basis for $T _{p}M$ so that $\omega
_{p}$ in this basis is the standard symplectic form, it
is easily verified that $$\forall p \in M: \eta _{p} = (n-1)
\alpha _{p},$$ for $\alpha $ the Lee form satisfying $d
\omega  = \alpha  \wedge \omega$, and where $2n $ is the
dimension of $M$.   We have thus obtained a map $LCS (M) \to
\Omega (T ^{*}M) $, which takes an $\lcs$ form and produces
its Lee form, and which is continuous with respect to the $C
^{1}$ topology on $LCS (M) $ and the $C ^{0}$ topology on
the space of $1$-forms.   Clearly the result follows.    
\end{proof}

The following characterization of convergence will be helpful.
\begin{lemma} \label{lemma:lcsconvergence}
Let $M$ be as above and let $\{\omega _{k}\} \subset LCS (M)  $ be a sequence $\mathcal{T} ^{0} $ converging to a symplectic form $\omega$. Denote by $\{\widetilde{\omega} _{k}  \}$ the lift sequence on the universal cover $\widetilde{M} $. Then there is a sequence $\{ \widetilde{\omega} _{k} ^{symp}\}  $ of symplectic forms on $\widetilde{M} $, and a sequence  $\{f _{k}\}  $ of positive functions pointwise converging to $1$, such that $ \widetilde{\omega} _{k} =  f _{k} \widetilde{\omega} _{k} ^{symp}   $.
\end{lemma}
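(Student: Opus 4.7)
The plan is to reduce the statement to an elementary computation on the universal cover, exploiting that $\widetilde{M}$ is simply connected and that the Lee form of the limit symplectic form vanishes.

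First, I would lift everything to $\widetilde{M}$. Denote by $\alpha_k$ the Lee form of $\omega_k$ and by $\widetilde{\alpha}_k$ its pullback to $\widetilde{M}$. Since $\widetilde{\alpha}_k$ is closed and $\widetilde{M}$ is simply connected, I can write $\widetilde{\alpha}_k = dh_k$ for a smooth function $h_k \in C^\infty(\widetilde{M})$, and normalize by requiring $h_k(\widetilde{x}_0) = 0$ at a fixed basepoint $\widetilde{x}_0$.

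Next, I would define
\begin{equation*}
\widetilde{\omega}_k^{symp} := e^{-h_k}\,\widetilde{\omega}_k, \qquad f_k := e^{h_k},
\end{equation*}
so that tautologically $\widetilde{\omega}_k = f_k\,\widetilde{\omega}_k^{symp}$ and $f_k > 0$. A one-line calculation using $d\widetilde{\omega}_k = \widetilde{\alpha}_k \wedge \widetilde{\omega}_k = dh_k \wedge \widetilde{\omega}_k$ gives
\begin{equation*}
d\bigl(e^{-h_k}\widetilde{\omega}_k\bigr) = -e^{-h_k}dh_k \wedge \widetilde{\omega}_k + e^{-h_k}\,dh_k\wedge\widetilde{\omega}_k = 0,
\end{equation*}
and non-degeneracy is preserved under multiplication by a positive function, so $\widetilde{\omega}_k^{symp}$ is symplectic.

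It remains to show $f_k \to 1$ pointwise, which amounts to $h_k \to 0$ pointwise. The limit $\omega$ is symplectic, hence has Lee form zero, so $\mathcal{T}^0$-convergence yields $d_{C^0}(\alpha_k,0) \to 0$ on $M$. Equipping $\widetilde{M}$ with the pullback metric (under which the covering projection is a local isometry), $\|\widetilde{\alpha}_k\|_{C^0(K)} \le \|\alpha_k\|_{C^0(M)} \to 0$ on any compact $K \subset \widetilde{M}$. For any fixed $\widetilde{x} \in \widetilde{M}$, choose a smooth path $\gamma$ from $\widetilde{x}_0$ to $\widetilde{x}$; then
\begin{equation*}
|h_k(\widetilde{x})| = \Bigl|\int_\gamma \widetilde{\alpha}_k\Bigr| \le \length(\gamma)\cdot \|\widetilde{\alpha}_k\|_{C^0(\gamma)} \to 0,
\end{equation*}
which gives pointwise convergence. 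I do not anticipate any real obstacle; the main subtlety is just to choose the normalization of $h_k$ so that $C^0$-smallness of $\alpha_k$ downstairs translates into pointwise smallness of $h_k$ upstairs, which is handled by the compact-set argument above.
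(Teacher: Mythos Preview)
Your proof is correct and essentially identical to the paper's. The paper uses the path-space model of $\widetilde{M}$ to define $g_k([p])=\int_{[0,1]} p^{*}\alpha_k$ (which is your $h_k$ with the same basepoint normalization), sets $f_k=e^{g_k}$ and $\widetilde{\omega}_k^{symp}=f_k^{-1}\widetilde{\omega}_k$, and concludes $g_k\to 0$ pointwise from $|\alpha_k|_{C^0}\to 0$; your length-of-path estimate just makes that last step slightly more explicit.
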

\begin{proof} We may assume that $M$ is connected. Let $\alpha _{k} $ be the Lee form of $\omega _{k} $, and $g _{k}$
functions on $\widetilde{M} $ defined by $g _{k}
   ([p]) = \int _{[0,1]} p ^{*} \alpha _{k} $,
where the universal cover $\widetilde{M} $ is
   understood as the set of equivalence classes of paths $p$ starting at a fixed $x _{0} \in M $,
  with a pair $p _{1}, p _{2}  $ equivalent if $p _{1} (1) = p _{2} (1)  $ and $p _{2} ^{-1} \cdot p _{1}$ is null-homotopic, where $\cdot$ is the path concatenation.

Then we get:
\begin{equation*}
   d \widetilde{\omega} _{k} = dg _{k} \wedge \widetilde{\omega} _{k},  
\end{equation*}
so that if we set $f _{k}:= e ^{g _{k}}  $ then
\begin{equation*}
   d (f ^{-1} _{k} \widetilde{\omega} _{k}) =0.
\end{equation*}
Since by assumption $|\alpha _{k}| _{C ^{0}}  \to 0$, then pointwise $g _{k} \to 0 $ and pointwise $f _{k} \to 1$, so that if we set $$\widetilde{\omega} ^{symp}  _{k}:= f ^{-1} _{k} \widetilde{\omega} _{k}$$ then we are done.
\end{proof}

\begin{definition} We say that a pair $(\omega, J)$ of an
$\lcs$ form $\omega$ on $M$ and an almost complex structure
$J$ on $M$ are \textbf{\emph{compatible}} if $\omega (\cdot,
J \cdot)$ defines a $J$-invariant inner product on $M$. For
other basic notions of $J$-holomorphic curves we refer the
reader to
\cite{citeMcDuffSalamonJholomorphiccurvesandsymplectictopology}.
\end{definition}
\begin{proposition} \label{thm:noSkycatastrophe} %    , satisfying: 
Let $M$ be as above, $A \in H _{2} (M) $ fixed, and $\{\omega _{t} \}$, $t \in [0,1]$, a  $\mathcal{T} ^{0} $ continuous  family of $\lcs$ forms on $M$. Let $\{J _{t} \}$ be a Frechet smooth family of almost complex structures, with $J _{t} $ compatible with $\omega _{t} $ for each $t$.
Let $D \subset \widetilde{M} $, with $\pi: \widetilde{M} \to
M$ the universal cover of $M$, be a fundamental domain, and
$K:= \overline {D}$  its topological closure. Suppose that
for each $t$, and for every  $x \in \partial K$ (the
topological boundary) there is a $\widetilde{J} _{t}
$-holomorphic hyperplane (real codimension 2 submanifold) $H _{x} $ through $x$, with $H _{x} \subset K $,  such that $\pi (H _{x}) \subset M$  is a closed submanifold and  such that $A \cdot \pi_*([H _{x}]) \leq 0$.
Define:
\begin{equation*}
 e _{t} (u):= \int _{\mathbb{CP} ^{1} } {u} ^{*} {\omega} _{t}.
\end{equation*}
 Then
\begin{equation*}
\sup _{u,t} e _{t} (u)  < \infty,
\end{equation*}
where the supremum is over all pairs $(u,t)$, $u:
   \mathbb{CP} ^{1} \to M$ is  $J _{t}
   $-holomorphic and in class $A$.
\end{proposition}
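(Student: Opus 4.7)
The plan is to pull each $J_t$-holomorphic sphere $u$ up to the universal cover $\widetilde{M}$, confine the lift to the fundamental domain $K$ by positivity of intersections against the hypersurfaces $H_x$, and then estimate the energy using the global conformal symplectic structure on $\widetilde{M}$ supplied by (the proof of) Lemma~\ref{lemma:lcsconvergence}.

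Since $\mathbb{CP}^1$ is simply connected, $u$ lifts to $\widetilde{u}:\mathbb{CP}^1\to\widetilde{M}$, and after a deck transformation I may assume the image meets $K$. The central claim is then that $\widetilde{u}(\mathbb{CP}^1)\subset K$. Suppose not; by connectedness there exists $x\in \widetilde{u}(\mathbb{CP}^1)\cap \partial K$, and the hypothesis supplies a $\widetilde{J}_t$-holomorphic hypersurface $H_x\subset K$ through $x$ with $\pi(H_x)$ closed in $M$ and $A\cdot \pi_*[H_x]\leq 0$. Since $\pi$ is a local diffeomorphism intertwining $\widetilde{J}_t$ with $J_t$, both $u$ and $\pi(H_x)$ are locally $J_t$-holomorphic of complementary dimension, and positivity of intersection forces every geometric intersection to contribute strictly positively to
\begin{equation*}
u\cdot \pi(H_x)\;=\;A\cdot \pi_*[H_x]\;\leq\; 0.
\end{equation*}
The intersection at $\pi(x)$ therefore compels $u\subset \pi(H_x)$, so $\widetilde{u}$ lies in the connected component of $\pi^{-1}(\pi(H_x))$ through $x$, which is precisely $H_x\subset K$; this contradicts the assumption that the image escapes $K$.

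With $\widetilde{u}(\mathbb{CP}^1)\subset K$ in hand, I decompose $\widetilde{\omega}_t = f_t\,\widetilde{\omega}_t^{symp}$ on $\widetilde{M}$ as in the proof of Lemma~\ref{lemma:lcsconvergence}, with $f_t=\exp(g_t)$, $g_t(p)=\int_{x_0}^p \alpha_t$, and $\widetilde{\omega}_t^{symp}$ symplectic. Since the family is $\mathcal{T}^0$-continuous on the compact interval $[0,1]$, $\sup_t|\alpha_t|_{C^0(M)}<\infty$, whence by compactness of $K$ there is a uniform bound $C:=\sup_t \sup_K f_t<\infty$. Because $f_t>0$, the form $\widetilde{\omega}_t^{symp}$ tames $\widetilde{J}_t$, so $\widetilde{u}^*\widetilde{\omega}_t^{symp}\geq 0$ pointwise and
\begin{equation*}
e_t(u)\;=\;\int_{\mathbb{CP}^1}(f_t\circ \widetilde{u})\cdot \widetilde{u}^*\widetilde{\omega}_t^{symp}\;\leq\; C\cdot \bigl\langle [\widetilde{\omega}_t^{symp}],\,[\widetilde{u}]\bigr\rangle.
\end{equation*}
Two distinct deck translates of a nonconstant $\widetilde{u}(\mathbb{CP}^1)$ can only meet $K$ along a lower-dimensional piece of $\partial K$, so the lift inside $K$ is essentially unique; combined with the constraint $\pi_*[\widetilde{u}]=A$, this forces $[\widetilde{u}]\in H_2(\widetilde{M};\mathbb{Z})$ to range over a finite set as $(u,t)$ varies. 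Continuity of $[\widetilde{\omega}_t^{symp}]$ in $t$ on the compact interval then yields the desired uniform bound.

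The principal difficulty is the confinement step: it is the precise mechanism by which the hypothesis $A\cdot \pi_*[H_x]\leq 0$ and positivity of intersection cooperate to tame the noncompactness of $\widetilde{M}$. The remaining steps are essentially packaged by Lemma~\ref{lemma:lcsconvergence} together with $\mathcal{T}^0$-continuity on the compact parameter interval.
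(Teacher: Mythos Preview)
Your argument mirrors the paper's proof almost exactly: confine the lift to $K$ via positivity of intersections (this is the paper's Lemma~\ref{lemma:K}), then write $\widetilde{\omega}_t=f_t\,\widetilde{\omega}_t^{symp}$ and bound $e_t(u)\le (\max_K f_t)\cdot\langle\widetilde{\omega}_t^{symp},[\widetilde{u}]\rangle$, finishing by $\mathcal{T}^0$-continuity over the compact parameter interval. The one place you are more explicit than the paper is in worrying about the homology class $[\widetilde{u}]\in H_2(\widetilde{M})$; the paper simply denotes this class by $A$ again and takes $D=\sup_t\langle\widetilde{\omega}_t^{symp},A\rangle$ without comment. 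Your finiteness claim (``the lift inside $K$ is essentially unique, hence $[\widetilde{u}]$ ranges over a finite set'') is not quite airtight as stated---proper discontinuity gives only finitely many lifts of a \emph{fixed} $u$ inside $K$, but that alone does not bound the classes across all $(u,t)$---yet in the intended application $\widetilde{M}\simeq S^2\times\mathbb{R}^{2n-2}$, where $\pi_*:H_2(\widetilde{M})\to H_2(M)$ is an isomorphism and $\pi_*[\widetilde{u}]=A$ pins down $[\widetilde{u}]$ uniquely, so the point is moot. In short: same proof, and you flagged a subtlety the paper elides.
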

\begin{proof} 
\begin{lemma} \label{lemma:K}
Let $M$, $A$ be as above, let $D \subset \widetilde{M} $, with $\pi: \widetilde{M} \to  M$ the universal cover of $M$, be a fundamental domain, and $K:= \overline {D}$  its topological closure. Let $(\omega,J)$ be a compatible $\lcs$ pair on $M$ such that for every $x \in \partial K$ there is a $\widetilde{J}$-holomorphic (real codimension 2) hyperplane $H _{x} \subset K \subset \widetilde{M}  $ through $x$, such that $\pi (H _{x}) \subset M$ \text{ is a closed submanifold and } such that $A \cdot [\pi (H _{x} )] \leq 0$.
Then any genus $0$, $J$-holomorphic class $A$ curve $u$ in $M$ has a lift $\widetilde{u} $ with image in $K$.
\end{lemma}
\begin{proof}
For $u$ as in the statement, let $\widetilde{u} $ be a lift
intersecting the fundamental domain $D$, (as in the
statement of main theorem). Suppose that $\widetilde{u}
$ intersects $\partial K$, otherwise we already have $\image
\widetilde{u} \subset K ^{\circ} $, for $K ^{\circ} $ the
interior, since $\image \widetilde{u}$ is connected (and by
elementary topology).  Then $\widetilde{u} $ intersects $H
_{x} $ as in the statement, for some $x$. So $u$ is
a $J$-holomorphic map intersecting the closed submanifold
$\pi (H _{x} )$ with $A \cdot [\pi (H _{x} )] \leq 0$. By
positivity of intersections \cite [Section
2.6]{citeMcDuffSalamonJholomorphiccurvesandsymplectictopology}, which in this case is just a simple exercise, $\image u \subset \pi (H _{x})$, and so $\image \widetilde{u} \subset H _{x}  $, and so $\image \widetilde{u} \subset \partial K$.
\end{proof}
  
Now returning to the proof of the proposition, let $u: \mathbb{CP} ^{1} \to M$ be a $J _{t} $-holomorphic class $A$ curve. By the lemma above $u$ has a lift $\widetilde{u}$ contained in the compact $K \subset \widetilde{M} $.  Then 
% for every $\epsilon>0$ there is a $N$ so that for $k>N$ 
we have: 
\begin{equation*}
  e _{t} (u)= \int _{ \mathbb{CP} ^{1} } \widetilde{u} ^{*} \widetilde{\omega} _{t}  \leq 
 C _{t} \langle \widetilde{\omega } _{t}  ^{symp}, A  \rangle,
\end{equation*}
   where $\widetilde{\omega} _{t} = f _{t}
   \widetilde{\omega}  ^{symp} _{t} $, for
   $\widetilde{\omega} ^{symp} _{t} $ symplectic
   on $\widetilde{M}$, and  $f _{t}: \widetilde{M}
   \to \mathbb{R}$ positive function constructed
   as in the proof of Lemma
   \ref{lemma:lcsconvergence}, and where $C _{t} =
   \max _{K} f _{t}   $. Since $\{\omega _{t} \}$
   is continuous in $\mathcal{T} _{0} $, we have
   that $\{f _{t}\} $, $\{\widetilde{\omega} _{t}
   ^{symp}   \}$  are $C _{0} $ continuous
   families in $t$. 
In particular $$C= \sup _{t} \max _{K} f _{t}  $$ and $$D= \sup _{t} \langle \widetilde{\omega} _{t'}  ^{symp}, A  \rangle$$ are finite.
And so 
\begin{equation*}
   \sup _{(u,t)} e _{t} (u) \leq C \cdot D,
\end{equation*}
where the supremum is over all pairs $(u,t)$, $u$
is $J _{t} $-holomorphic, class $A$, curve in $M$ as above.
\end{proof}
\section {Quick review of genus 0 Gromov-Witten
theory} Let $M$ be a compact smooth manifold with a pair 
$(\omega, J)$ for $\omega$  a non-degenerate
smooth 2-form and $J$ an almost complex structure.
We assume that $\omega (\cdot, J
\cdot) $ is a $J$-invariant inner product on $M$, and such
a $J$ is called $\omega $-compatible.
We will call the above data $(M,\omega,J) $  an
\textbf{\emph{almost symplectic triple}}.   

Let $$ \mathcal{M} _{0,n}   (J, A) = \mathcal{M}
_{0,n}  (M, J, A)$$ denote the moduli space of
isomorphism classes of class $A$, $J$-holomorphic
curves in $M$, with domain the Riemann sphere,
with $n$ marked labeled points $\{x _{1}, \ldots x
_{n}\}$.  In other words, $ \mathcal{M} _{0,n}
(J, A) $  is the set of isomorphism classes of
tuples $(u, \{x _{1}, \ldots, x _{n}\}) $,   where
$u: \mathbb{CP} ^{1} \to M$ is a $J$-holomorphic
map. Here an isomorphism between $(u _{1}, \{x 
_{1}, \ldots, x _{n} \})
$ and $(u _{2}, \{x' 
_{1}, \ldots, x' _{n} \})$  is a biholomorphism   $\phi:
\mathbb{CP} ^{1}  \to \mathbb{CP} ^{1}   $, s.t.
$\phi (x _{i}) = x _{i}' $  and s.t. $u_2 \circ \phi = u _{1} $.
Let $$ e _{\omega}:  \mathcal{M} _{0,n}   (J, A)
\to \mathbb{R} ^{}, $$ 
be the energy:
$$e _{\omega} ([u]) := e _{\omega} (u) := \int _{\mathbb{CP} ^{1}
} u ^{*} \omega, $$
where we take any representative $u$ of the class
$[u] $. 
(Note that this (up to a
factor) is the $L
^{2}$ energy of the map $u$  with respect to
appropriate inner products, see
~\cite
[Section 2.2]
{citeMcDuffSalamonJholomorphiccurvesandsymplectictopology}). 
\begin{notation}
   \label{notation:class} In what follows we
   usually neglect to distinguish classes and
representatives. As this should be clear from
context. So from now on we just write $u$.
\end{notation}

Let $ \{(M, \omega _{t}, J _{t}) \}$, $t \in
   [0,1] $,   be  a family of almost symplectic
   triples with $\{(\omega _{t}, J _{t}) \}$
   varying smoothly in $t$. We will say that
$\{(M, \omega _{t}, J _{t}) \}$ is a \textbf{\emph{smooth family
of almost symplectic triples}}. 
Given a smooth family of almost symplectic triples 
$ \{(M, \omega _{t}, J _{t}) \}$, $t \in
   [0,1] $,
we denote by 
$${\mathcal{M}} _{0,n}   (\{J _{t} \}, A)$$
the space of pairs $(u,t)$, $u \in
{\mathcal{M}} _{0,n}(J _{t}, A)$.
(Dropping the marked points from the notation.) 

The following is well known and follows by the same argument
as \cite[Theorem
5.6.6]{citeMcDuffSalamonJholomorphiccurvesandsymplectictopology}. 
\begin{theorem} \label{thm:complete} Let $(M,
   \omega,J) $ be as above. 
 Then $\mathcal{M} _{0,n} (M, J, A)$  has a pre-compactification  
   \begin{equation*}
\overline{\mathcal{M}} _{0,n}   (M, J, A), 
\end{equation*}
by Kontsevich stable maps, with respect to the natural
metrizable Gromov topology  \cite [Chapter
5.6]{citeMcDuffSalamonJholomorphiccurvesandsymplectictopology}.
Moreover given $E>0$,   the subspace
$\overline{\mathcal{M}} _{g,0}   (J,
 A) _{E} \subset \overline{\mathcal{M}}_{g,0}   (J,
 A) $ consisting of elements $u$ with $e _{\omega} (u) \leq E$ is
 compact.  In other words $e=e _{\omega}$ is a proper function on $\overline{\mathcal{M}}_{g,0}   (J,
 A)$.
Similarly, if $\{(M, \omega _{t}, J _{t}) \}$ is a
smooth family of almost symplectic triples, and we
   define $$e: \overline{\mathcal{M}}_{0,n}   (\{J
   _{t} \}, A) \to \mathbb{R} $$
   by $$e (u,t) = e _{\omega _{t}} (u),  $$ then
   $e$ is a proper function.
\end{theorem}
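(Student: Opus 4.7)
The plan is to adapt the proof of \cite[Theorem 5.6.6]{citeMcDuffSalamonJholomorphiccurvesandsymplectictopology} essentially verbatim, observing that the closedness hypothesis $d\omega=0$ is nowhere actually invoked in the Gromov compactness machinery once one has a uniform bound on $e_\omega$. In the symplectic proof, closedness enters substantively only in identifying $e_\omega$ with the topological quantity $\langle[\omega],A\rangle$; here we simply take the bound $e_\omega\le E$ as an input hypothesis, which is precisely how $e_\omega$ feeds into the argument in any case.

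The single input compatibility supplies is the pointwise identity, for any $J$-holomorphic $u:\Sigma\to M$,
\begin{equation*}
\tfrac12|du|_{g_J}^2\, d\mathrm{vol}_\Sigma = u^*\omega,\qquad g_J := \omega(\cdot,J\cdot),
\end{equation*}
a linear-algebraic identity on each fiber that does not involve $d\omega$. Consequently $e_\omega(u)\le E$ is literally a Dirichlet-energy bound with respect to $g_J$ on the compact manifold $M$, and this is the sole analytic input needed. I would then verify in turn that the four standard ingredients of Gromov compactness are insensitive to $d\omega$: (i) $\epsilon$-regularity for the Cauchy--Riemann equation, a local elliptic estimate; (ii) bubble extraction at points of energy concentration by rescaling; (iii) Sacks--Uhlenbeck removal of singularities for finite-$g_J$-energy punctured discs; and (iv) the \emph{no-energy-loss} neck analysis that glues bubbles via monotonicity/isoperimetric inequalities in $(M,g_J)$. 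Each uses only $g_J$ and the pointwise identity above, so proceeds unchanged; Kontsevich stability of the limit follows by collapsing ghost components exactly as in the symplectic case.

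For the parametrized statement I would pass to a subsequence with $t_k\to t_\infty$ using compactness of $[0,1]$, and use smoothness of $\{J_t\}$ together with continuity of $\{\omega_t\}$ (in $\mathcal{T}^0$ or $C^0$) to make the universal constants in (i)--(iv) uniform in $t$; the same argument then produces a Gromov limit that is a $J_{t_\infty}$-holomorphic stable map of total energy $\le E$.

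I expect the main obstacle to lie in step (iv). In the closed case one uses Stokes together with $d\omega=0$ to convert the energy on a thin neck annulus into a boundary integral, controlled by the isoperimetric inequality. Since $d\omega=\alpha\wedge\omega$ here, Stokes produces an extra error term involving the Lee form $\alpha$. I would control this by combining the uniform $C^0$ bound on $\alpha$ (available on compact $M$, and in the family case uniform in $t$ by our hypothesis) with the Dirichlet energy bound and the isoperimetric inequality in $(M,g_J)$, and show this error vanishes as the neck becomes thin. Everything else is routine bookkeeping on top of the McDuff--Salamon argument.
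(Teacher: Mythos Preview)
Your approach coincides with the paper's, which simply defers to \cite[Theorem~5.6.6]{citeMcDuffSalamonJholomorphiccurvesandsymplectictopology} and asserts that the same argument applies verbatim; your outline is a reasonable unpacking of what that citation entails.

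There is one caveat. The fix you propose for step~(iv) invokes the Lee form $\alpha$, but the theorem as stated is for \emph{arbitrary} almost symplectic triples $(M,\omega,J)$, where $\omega$ need not be lcs and no Lee form exists. (The same issue touches step~(iii), since the McDuff--Salamon proof of removal of singularities also routes through the symplectic isoperimetric inequality.) The cleaner observation is that the relevant estimates are \emph{local}: on any sufficiently small ball $B\subset M$ one can choose a closed $2$-form $\omega'$ taming $J|_B$, and the $g_J$-Dirichlet energy is uniformly comparable to the $\omega'$-energy on $B$; the isoperimetric and neck estimates then go through with $\omega'$ in place of $\omega$. Alternatively, the long-cylinder exponential decay used in the neck analysis is a purely elliptic consequence of the mean value inequality and does not use $d\omega=0$ at all. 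Either route dissolves your Stokes concern without appealing to the lcs structure, so the result holds at the stated generality.
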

Thus, the most basic situation where we can talk
about Gromov-Witten ``invariants'' of $(M, J)$ is when the $\energy$ function is bounded on $\overline{\mathcal{M}} _{g,0}   (J, A)$.
In this case $ \overline{\mathcal{M}} _{g,n}   (J, A)$ is
compact, and has a virtual moduli cycle as in the original
approach of Fukaya-Ono \cite{citeFukayaOnoArnoldandGW}, or
the more algebraic approach of Pardon
\cite{citePardonAlgebraicApproach}.
So we may define, as usual, functionals called the
Gromov-Witten invariants:
\begin{equation} \label{eq:functional1}
GW _{g,n}  (A,J): H_* (\overline{M} _{g,n}) \otimes H _{*} (M ^{n} )  \to
   \mathbb{Q},
\end{equation}
where $\overline {M} _{g,n} $ denotes the
compactified moduli space of Riemann surfaces.
Of course closed symplectic manifolds with any
tame almost complex structure is one class of
examples, where these functionals are defined, as
in that case we have a priori bounds on the energy
of holomorphic curves in a fixed class.

Even when defined, these functionals will not in general be
$J$-invariant, but it is immediate, again by
Pardon ~\cite{citePardonAlgebraicApproach},
that they are invariant for a smooth family $\{J _{t} \}$, $t \in [0,1]$ such that the corresponding ``cobordism moduli space'': $\overline{\mathcal{M}} _{g,0}   (\{J _{t} \}, A),$
is compact.

In the case of the main argument ahead we can actually avoid
virtual moduli cycle theory, and base the argument on
standard theory of 
McDuff-Salamon~\cite{citeMcDuffSalamonJholomorphiccurvesandsymplectictopology},
once we establish compactness.
For given a $J$ on $M = S ^{2} \times T ^{2n-2}$ compatible
with an lcs structure we can preclude bubbling for
a sequence of $J$-holomorphic curves in the class $A= [S ^{2}] \otimes [pt] \in H _{2} (S ^{2} \times
T ^{2n-2}, \mathbb{Z}) $
using the following.
\begin{lemma} \label{lemma_bubbling} Let $(M, \omega, J) $ be
an almost symplectic triple with $\omega$ an lcs form.
Suppose further $H _{2} (M, \mathbb{Z})
= \mathbb{Z}$ and is generated by $A$ having
a representative $u: S ^{2} \to M$ satisfying: 
\begin{equation*}
\int _{S ^{2} } u ^{*} \omega >0. 
\end{equation*}
Then if $v: S ^{2} \to M$ is a non-constant $J$-holomorphic
map  $[v] = c \cdot A $ with $c>0$.
\end{lemma}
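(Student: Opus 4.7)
The plan uses three ingredients: pointwise positivity of $v^*\omega$ for $J$-holomorphic $v$, passage to the universal cover to recover an honest symplectic form, and a homological identification of $[\widetilde{v}]$ with $c[\widetilde{u}]$. First, since $\omega(\cdot,J\cdot)=g_J$ is a Riemannian metric, the Cauchy--Riemann equation forces $v^*\omega=\tfrac{1}{2}|dv|_{g_J}^{2}\,\mathrm{vol}_{S^{2}}$ for any $J$-holomorphic $v\colon S^{2}\to M$; hence $v^*\omega\geq 0$ pointwise, with $\int_{S^{2}}v^*\omega>0$ whenever $v$ is non-constant.

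Next I would pull back to the universal cover $\pi\colon\widetilde{M}\to M$. Since $\widetilde{M}$ is simply connected, $\pi^*\alpha=dg$ for some smooth $g$, and setting $\omega_{0}:=e^{-g}\widetilde{\omega}$---exactly the construction from the proof of Lemma \ref{lemma:lcsconvergence}---yields a closed, non-degenerate, hence symplectic form on $\widetilde{M}$. The lifted $\widetilde{J}$ is compatible with $\widetilde{\omega}$, and positive conformal rescaling preserves compatibility, so $\widetilde{J}$ is compatible with $\omega_{0}$ as well. Because $S^{2}$ is simply connected, $v$ and $u$ admit lifts $\widetilde{v},\widetilde{u}\colon S^{2}\to\widetilde{M}$; applying the first step on $(\widetilde{M},\omega_{0},\widetilde{J})$ to the non-constant $\widetilde{J}$-holomorphic map $\widetilde{v}$ gives $\int_{S^{2}}\widetilde{v}^*\omega_{0}>0$.

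For the homological comparison, observe that $\pi_{*}[\widetilde{v}]=cA=c\cdot \pi_{*}[\widetilde{u}]$ in $H_{2}(M,\mathbb{Z})$, so $[\widetilde{v}]-c[\widetilde{u}]\in\ker\pi_{*}$. In the setting of the main application, for $M=S^{2}\times T^{2n-2}$ with $\widetilde{M}=S^{2}\times\mathbb{R}^{2n-2}$, the map $\pi_{*}\colon H_{2}(\widetilde{M},\mathbb{Z})\to H_{2}(M,\mathbb{Z})$ is injective on the relevant summand, and so $[\widetilde{v}]=c[\widetilde{u}]$. Since $\omega_{0}$ is closed, this identity descends to $\int_{S^{2}}\widetilde{v}^*\omega_{0}=c\int_{S^{2}}\widetilde{u}^*\omega_{0}$, which by the previous paragraph forces $c$ and $\int_{S^{2}}\widetilde{u}^*\omega_{0}$ to share a sign.

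The main obstacle is to deduce $\int_{S^{2}}\widetilde{u}^*\omega_{0}>0$ from the hypothesis $\int_{S^{2}}u^*\omega>0$: pointwise one has $\widetilde{u}^*\omega_{0}=e^{-g\circ\widetilde{u}}\,u^*\omega$, and multiplication of a $2$-form by a positive but non-constant function does not in general preserve the sign of its integral. My way around this is to exploit the freedom in the choice of representative $u$: for the product form on $S^{2}\times T^{2n-2}$ and its $\mathcal{T}^{0}$-close lcs perturbations, the standard fiber inclusion $u\colon S^{2}\hookrightarrow S^{2}\times\{\mathrm{pt}\}$ has $u^*\omega\geq 0$ pointwise (and strictly positive on a set of full measure). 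For such $u$, $\widetilde{u}^*\omega_{0}\geq 0$ pointwise as well, so $\int_{S^{2}}\widetilde{u}^*\omega_{0}>0$ and $c>0$, as required.
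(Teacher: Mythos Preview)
Your approach is essentially the same as the paper's: lift to the universal cover, write $\widetilde{\omega}=e^{g}\omega'$ with $\omega'$ symplectic, use $\widetilde{J}$-holomorphicity of $\widetilde{v}$ to get $\langle[\widetilde{v}],[\omega']\rangle>0$, and compare with $[\widetilde{u}]$.

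You are right to flag the two subtleties. The paper simply asserts ``by assumption also $\langle[\widetilde{u}],\widetilde{\omega}'\rangle>0$'' and then ``the conclusion readily follows,'' without addressing either (i) why $\int_{S^{2}}u^{*}\omega>0$ forces $\int_{S^{2}}\widetilde{u}^{*}\omega'>0$ when $u^{*}\omega$ is not pointwise nonnegative, or (ii) why $[\widetilde{v}]=c[\widetilde{u}]$ in $H_{2}(\widetilde{M})$ rather than merely in $H_{2}(M)$. Your observation that multiplying a sign-changing $2$-form by a nonconstant positive function can flip the sign of its integral is exactly the issue, and the paper does not resolve it in the generality stated.

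Your resolution---specializing to $M=S^{2}\times T^{2n-2}$, where $\pi_{*}$ is an isomorphism on $H_{2}$ and where one may take $u$ to be the fiber $S^{2}\times\{\mathrm{pt}\}$ so that $u^{*}\omega\geq 0$ pointwise for the forms under consideration---is correct and is all that is needed for the application (indeed the paper only invokes the lemma in this case). Strictly speaking, then, you have not proved the lemma as stated in full generality, but you have proved what the paper actually uses, and you have been more honest about the hypotheses required. A clean fix to the statement would be to assume that $A$ admits a representative $u$ with $u^{*}\omega\geq 0$ pointwise (equivalently, a $J$-holomorphic representative) and that $\pi_{*}\colon H_{2}(\widetilde{M})\to H_{2}(M)$ is injective; both hold trivially in the intended application.
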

Note that the above does not hold for a general almost
symplectic manifold. Using the lemma above we see that any
$J$-holomorphic stable map into $M$, with non-homologous
components, cannot be in class $A$, unless it has just one
component. 
\begin{proof} [Proof]
Let $\widetilde{M} $ denote the universal cover and let
$\widetilde{\omega } $, $\widetilde{J} $ be the lift of
$\omega $ and $J$ respectively. Then
$\omega $ is globally conformally symplectic as the
obstruction Lee class $\alpha \in H ^{1} (M, \mathbb{R} ^{}
)$ vanishes. So $\omega = e ^{g} \omega '$ with $\omega '$
symplectic. As the lift $\widetilde{v} $ of $v$ is
$\widetilde{J} $-holomorphic we have 
$\int _{\Sigma'}
\widetilde{v}  ^{*} \widetilde{\omega }  >0$, which 
implies  $\int _{\Sigma'} \widetilde{v}  ^{*}
\widetilde{\omega}'  >0,$
i.e. $ \langle [\widetilde{v} ], \widetilde{\omega }' \rangle
> 0 $.
Now by assumption also $\langle [\widetilde{u}],
\widetilde{\omega }' \rangle > 0  $ and the conclusion
readily follows.
\end{proof}

% the completion (pre-compactification) by Kontsevich stable maps of the moduli space of pairs $(u,t)$ $(u,t)$, $u$ is a class $A$, $J _{t} $-holomorphic, genus $0$ curve in $M$ with $n$-marked points.
\section{Main argument}
\label{section:mainargument} We will first
state and prove a more general result, from which
Theorem \ref{cor:nonsqueezingintro} will be
deduced. 

Let $M=S ^{2} \times T ^{2n-2}$. We have  real
codimension 1 hypersurfaces $$\Sigma _{i} =S ^{2}
\times  (S ^{1} \times \ldots \times S ^{1}
\times   \{pt \} \times S ^{1}   \times \ldots
\times S ^{1})   \subset M,   $$ where the
singleton $\{pt\} \subset S ^{1}  $ replaces the
$i$'th factor of $T ^{2n-2}= S ^{1} \times \ldots
\times S ^{1}  $. The hypersurfaces $\Sigma _{i} $
are naturally foliated by the symplectic submanifolds 
 $$M _{\theta} = S ^{2}  \times  (S ^{1} \times \ldots \times S ^{1}  \times \{pt\} \times \{\theta \} \times S ^{1}   \times \ldots \times S ^{1})  \simeq S ^{2} \times T ^{2n-2},$$ $\theta \in S ^{1} $. We denote by $T ^{fol} \Sigma _{i} \subset TM  $, the distribution of vectors tangent to the leaves of the above-mentioned foliation. In other words 
$$T ^{fol} \Sigma _{i} = \cup _{\theta} i _{*} TM
   _{\theta},     $$ where $i: M _{\theta} \to M $
   are the inclusion maps. 
Set $\Sigma = \bigcup _{i} \Sigma _{i} $, and $U = M - \Sigma $.  
\begin{theorem} \label{cor:nonsqueezing} Let $A,\omega$ and
$M =S ^{2} \times T ^{2n-2}  $, be as before s.t. 
 $ \langle \omega, A\rangle = \pi r ^{2} $. Let $\{\omega _{t} \} $, $t \in [0,1]$, $\omega _{0} = \omega$ be a $\mathcal{T}^{0}$ continuous family of $\lcs$ forms on $M$. Set $R>r$, then there is an $\epsilon >0$  s.t. if $d _{0}  (\omega _{t}, \omega _{0}) < \epsilon$ for all $t$, then
there is no symplectic embedding $$\phi: (B _{R}, \omega
_{st}) \hookrightarrow U, $$ meaning an embedding $\phi$ such that $\phi ^{*} \omega _{1} = \omega _{st}   $.

More generally, there is no symplectic embedding $$\phi: (B _{R}, \omega _{st})  \hookrightarrow (M, \omega _{1}), $$  s.t 
   $\phi _{*} j$   \text{ preserves the bundle }
   $T ^{fol} \Sigma _{i},$ for $j$ the standard
   almost complex structure on $B _{R}$, whenever
   $\phi (x) \in \Sigma _{i}$. In other words, 
\begin{equation} \label{eq:complexlinear}
\phi _{*}j (T ^{fol} \Sigma _{i})
   \subset T ^{fol} \Sigma _{i} \subset TM,  
\end{equation}
whenever $\phi (x) \in \Sigma _{i}$.    
\end{theorem}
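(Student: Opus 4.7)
The plan is to adapt Gromov's classical non-squeezing argument to the present lcs setting, with compactness of the parametric moduli space handled by Proposition \ref{thm:noSkycatastrophe}. Assume for contradiction that $\phi$ exists. First, I choose a Frechet-smooth family $\{J_t\}_{t\in[0,1]}$ of almost complex structures on $M$, with $J_t$ compatible with $\omega_t$, such that $J_0$ is the standard product Kahler structure, $J_1|_{\phi(B_R)} = \phi_* j$, and for each $t$ and $i$ the structure $J_t$ preserves the distribution $T^{fol}\Sigma_i$ along $\Sigma_i$, so that each leaf $M_\theta$ is a $J_t$-holomorphic submanifold. The pointwise compatibility of the last two conditions at points of $\phi(B_R)\cap\Sigma_i$ is exactly hypothesis (\ref{eq:complexlinear}); the family is then constructed by pointwise convex interpolation on the contractible space of compatible almost complex structures.

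Next, I establish compactness of the parametric moduli space $\overline{\mathcal{M}}_{0,1}(\{J_t\}, A)$. Take the fundamental domain $K = S^2 \times [0,1]^{2n-2}$ in $\widetilde{M} = S^2 \times \mathbb{R}^{2n-2}$. Each face of $\partial K$ lies over some $\Sigma_i$ and is foliated by lifts of the codimension-2 leaves $M_\theta$; for $x \in \partial K$ I take $H_x$ to be this lifted leaf through $x$, which by construction is $\widetilde{J}_t$-holomorphic, lies in $K$, and projects to the closed submanifold $M_\theta \simeq S^2 \times T^{2n-4}$ whose intersection number with $A$ is zero. Proposition \ref{thm:noSkycatastrophe} then yields a uniform energy bound, and tracing constants through its proof together with Lemma \ref{lemma:lcsconvergence} gives $\sup_{u,t} e_t(u) \leq (1+\delta(\epsilon))\pi r^2$ with $\delta(\epsilon)\to 0$ as $\epsilon\to 0$. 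To preclude bubbling I lift to the universal cover and apply a variant of Lemma \ref{lemma_bubbling}: for $\epsilon$ small each $J_t$ is $C^0$-close to the integrable product structure, so any non-constant $J_t$-holomorphic sphere has $\widetilde{\omega}_t^{symp}$-area positive and hence homology class a positive multiple of $A$, ruling out multi-component stable representatives of $A$.

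With compactness in hand the argument is classical. As remarked in Section 3 of the excerpt, in the absence of bubbling one may use standard McDuff--Salamon transversality to carry out the cobordism argument without invoking the virtual fundamental class. At $t=0$ the class-$A$ genus-zero invariant counting spheres through a point is non-zero (the spheres are graphs of constant maps into $T^{2n-2}$), hence the invariant is non-zero at $t=1$, and there is a $J_1$-holomorphic class-$A$ sphere $u_1$ through $\phi(0)$. Pulling back by $\phi^{-1}$ the component of $u_1^{-1}(\phi(B_R))$ containing the preimage of $\phi(0)$ gives a $j$-holomorphic proper minimal surface in $B_R$ through $0$, whose area is at least $\pi R^2$ by the Lelong monotonicity inequality. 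But this area equals the $\omega_1$-area of the corresponding piece of $u_1$, bounded by $e_1(u_1)\leq (1+\delta(\epsilon))\pi r^2$. Choosing $\epsilon$ small enough that $(1+\delta(\epsilon))\pi r^2 < \pi R^2$ yields the contradiction; Theorem \ref{cor:nonsqueezingintro} follows as the special case in which $\phi(B_R)\subset U = M\setminus\Sigma$ makes hypothesis (\ref{eq:complexlinear}) vacuous.

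The main obstacle is the parametric compactness: Gromov--Witten theory genuinely fails for general lcs forms because energies of holomorphic curves are not topological, and the $J_t$-holomorphic foliations by the $M_\theta$ are precisely what confine lifted curves to the compact fundamental domain $K$ and restore the energy bound. Hypothesis (\ref{eq:complexlinear}) is imposed for exactly this reason --- so that $J_1$ on $\phi(B_R)$ extends to a global $J_1$ preserving the foliations by the $M_\theta$ --- and without it one is left with the genuinely open situation discussed in Remark \ref{remark_U}.
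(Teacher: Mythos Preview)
Your proof is correct and follows essentially the same route as the paper: extend $\phi_*j$ to a family $\{J_t\}$ compatible with $\{\omega_t\}$ and preserving the foliations of the $\Sigma_i$, invoke Proposition~\ref{thm:noSkycatastrophe} with $K=S^2\times[0,1]^{2n-2}$ and the leaves $M_\theta$ as the hyperplanes $H_x$, use the resulting compactness together with the classical count $GW_{0,1}(A,J_0)([pt])=1$ to produce a $J_1$-sphere through $\phi(0)$, and finish with the monotonicity inequality once the quantitative energy bound $e_1(u)<\pi R^2$ is extracted from Lemma~\ref{lemma:lcsconvergence}.

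One small correction: in your bubbling paragraph you assert that ``for $\epsilon$ small each $J_t$ is $C^0$-close to the integrable product structure.'' This is false in general --- $J_1$ equals $\phi_*j$ on $\phi(B_R)$ and need not be close to the product structure at all --- and it is also unnecessary. Lemma~\ref{lemma_bubbling} requires only that $\omega_t$ be lcs and that the generator $A$ of $H_2(M,\mathbb{Z})$ have a representative of positive $\omega_t$-area; the latter holds since $\omega_t$ is $C^0$-close to $\omega$ and $\langle\omega,A\rangle=\pi r^2>0$. With that adjustment your argument matches the paper's.
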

Let us elaborate a bit. Assuming there is no volume
obstruction, (and of course this can be arranged)  then of
course there is a volume preserving counterexample $\phi$
to the theorem. Moreover, given a symplectic
counterexample $\phi$, which necessarily does not satisfy the condition \eqref{eq:complexlinear}, it
should be possible to deform it to a symplectic embedding which
does satisfy this condition. This of course would be
a contradiction to the theorem, and so this indicates that
the condition \eqref{eq:complexlinear} might be removable. 
\begin{proof} [Proof of Theorem \ref{cor:nonsqueezing}] \label{section:proofnonsqueezing}
The second part of the theorem vacuously implies
the first, and we proceed with the proof of the
second part. Fix an $\epsilon' >0$ s.t. any 2-form
$\omega _{1}  $ on $M$, $C ^{0} $
$\epsilon'$-close to $\omega$, is
non-degenerate and is non-degenerate on the
leaves of the foliation of each $\Sigma _{i}
$, discussed prior to the formulation of the
theorem. Suppose by contradiction that for
every $\epsilon>0$ there is a $\mathcal{T}
^{0}$ continuous homotopy $\{\omega _{t} \}$ of $\lcs$ forms, with $\omega _{0}=\omega$, such that $\forall t: d _{0} (\omega_{t}, \omega) < \epsilon$ and such that 
there exists a symplectic embedding $$\phi: B _{R}  \hookrightarrow (M, \omega _{1}), $$   s.t 
$$\phi _{*}j (T ^{fol} \Sigma _{i})
\subset T ^{fol} \Sigma _{i} \subset TM,  $$
whenever $\phi (x) \in \Sigma _{i}$.

Take $\epsilon<\epsilon'$, and let $\{\omega _{t} \}$ be as
in the hypothesis above. In particular, $\omega _{t} $ is an
$\lcs$ form for each $t$, and is non-degenerate on the
leaves of $\Sigma _{i} $.
Extend $\phi _{*}j $ to an $\omega _{1} $-compatible
almost complex structure $J _{1} $ on $M$, preserving $T
^{fol}  \Sigma _{i} $ for each $i$. We may then extend
this to a family $\{J _{t} \}  $ of almost complex
structures on $M$, s.t. $J _{t} $ is $\omega _{t}
$-compatible for each $t$, with $J _{0} $ is the
standard split complex structure on $M$ and such that $J
_{t} $ preserves $T ^{fol} \Sigma _{i} $ for each $t,i$.
The latter condition can be satisfied since the leaves of $\Sigma _{i}
$ are $\omega _{t} $-symplectic for each $i,t$.  When
$\phi (B _{R})$ does not intersect $\Sigma $ these
conditions can be trivially satisfied. First find an
extension $J _{1} $ of $\phi _{*} j $ preserving $T
^{fol} \Sigma _{i} $ for each $i$. Then extend $J _{1}$ to a family $\{J _{t} \}$.

Now the family $\{(\omega _{t}, J _{t}  )\}$
satisfies the hypothesis of Proposition
\ref{thm:noSkycatastrophe} for the class $A =
[S ^{2} ] \otimes [pt]$ as in the statement of
the theorem we are proving. Then by Proposition
\ref{thm:noSkycatastrophe} $L ^{2}$ energy $e$ is bounded on   
$$C=\overline{\mathcal{M}} _{0,1}   (\{J _{t} \},
   A)$$ and hence $C$ is compact by Theorem \ref{thm:complete}. 

The classical Gromov-Witten invariant counting class $A$, $J
_{0} $-holomorphic, genus 0 curves passing through a fixed
point is:
\begin{equation*}
GW _{0,1} (A,J _{0} )([pt]) =1,
\end{equation*}
whose calculation already appears in
\cite{citeGromovPseudoholomorphiccurvesinsymplecticmanifolds.}.
Then by compactness of $C$, and the discussion preceding the proof: $$GW _{0,1} (A,J _{1} ) ([pt]) =1.$$ 
In particular there is a class $A$ $J
   _{1}$-holomorphic curve $u: \mathbb{CP} ^{1}
   \to M$ passing through $\phi ({0}) $.  

   By Lemma \ref{lemma:K} we may choose a lift $\widetilde{u} $ of $u$ to $\widetilde{M} $, with homology class $[\widetilde{u} ]$ also denoted by $A$ so that the image of $\widetilde{u} $ is contained in a compact set $K \subset \widetilde{M} $, (independent of the choice of $\{\omega _{t} \}, \{J _{t} \}$ satisfying above conditions). 
Let $\widetilde{\omega} ^{symp} _{t} $ and $f _{t} $ be as in Lemma \ref{lemma:lcsconvergence},  then by this lemma for every $\delta > 0$ we may find an $\epsilon>0$
so that if $d _{0} (\omega _{1}, \omega)< \epsilon$ then $d _{C ^{0}} (\widetilde{\omega} ^{symp},  \widetilde{\omega} _{1} ^{symp}) <\delta$ on $K$, and $\sup _{K} |f _{1} - 1|  < \delta$.

Let $\delta$ as above be chosen, and let
   $\epsilon$ correspond to this $\delta$. Now  we have: $$  |\langle \widetilde{\omega} _{1}    ^{symp}, A  \rangle - \pi \cdot r ^{2}| = 
|\langle \widetilde{\omega} _{1}  ^{symp}, A  \rangle - 
\langle \widetilde{\omega}  ^{symp}, A  \rangle |
 \leq \delta \pi \cdot r ^{2},$$
 as $ \langle \widetilde{\omega} ^{symp}, A
   \rangle =\pi r ^{2}  $, and as $d _{C ^{0}}
   (\widetilde{\omega} ^{symp},
   \widetilde{\omega} _{1} ^{symp}) <\delta$.  
And we have $$\max _{K} f _{1} \leq 1+ \delta.$$ 
So choosing $\epsilon, \delta$ appropriately  we get
$$|\int _{\mathbb{CP} ^{1} } u ^{*} \omega _{1} - \pi r ^{2}    | \leq  |\max _{K} f _{1} \langle \widetilde{\omega} _{1}  ^{symp}, A  \rangle - \pi \cdot r ^{2}  | < \pi R ^{2} - \pi r^{2}.   $$   

  Consequently, $$\int _{\mathbb{CP} ^{1} } u ^{*}
\omega _{1} < \pi R ^{2}. $$

  We may then proceed exactly as in the now classical proof
	of
	Gromov~\cite{citeGromovPseudoholomorphiccurvesinsymplecticmanifolds.}
	of the non-squeezing theorem to get a contradiction and
	finish the proof.  A bit more specifically, $\phi ^{-1}
	({\image \phi \cap \image u})  $ is a  minimal surface in
	$B _{R}  $, with boundary on the boundary of $B _{R} $,
	and passing through $0 \in B _{R} $. By construction it has area strictly less then $\pi R ^{2} $ which is impossible by the classical monotonicity theorem of differential geometry.
\end{proof}

\begin{proof} [Proof of Theorem \ref{cor:nonsqueezingintro}] 
Set $U = M - \bigcup _{i} \Sigma _{i}$. Let
   $\epsilon$ be as given by the    Theorem
   \ref{cor:nonsqueezing}.  By Proposition \ref{prop:C1deformation} there is a $\epsilon'$ s.t.
   whenever $\omega _{0}, \omega _{1} \in LCS (M)
   $ are  $C ^{1}$ $\epsilon'$-close, they are
   $\mathcal{T} _{0}$  
   $\epsilon$-close.      
      
   Let $\{\omega _{t}\}$
   be given as in the hypothesis, and such that $d
   _{C ^{1}} (\omega _{0}, \omega _{t}) <
   \epsilon' $ for all $t$. 
By Proposition \ref{prop:C1deformation}.
   $\{\omega _{t}\}$ is $\mathcal{T} ^{0}$ 
   continuous,   and by the discussion above 
   $$\forall t:  d
   _{0} (\omega _{0}, \omega _{t}) <
   \epsilon. $$
   So applying Theorem
   \ref{cor:nonsqueezing} we obtain that there is
   no symplectic embedding $B _{R} \hookrightarrow
   (U, \omega _{1})$.  And so we are done.
\end{proof}
\begin{proof} [Proof of Theorem \ref{thm:Gromov}] 
	We only sketch the proof, as it is basically just a special case of Theorem \ref{cor:nonsqueezing}. For $\epsilon$ taken to be sufficiently small, the family
	$\omega _{t} = t \omega _{0} + (1-t) \omega'  $ is
	a family of symplectic forms on $M$. Then proceed as in
	the proof Theorem \ref{cor:nonsqueezing}, upon noting that we
	do not need additional assumptions on the embedding
	$\phi$ or the family $\omega _{t}$, to have compactness of
	the relevant moduli spaces. So compactness is automatic,
	and the proof goes through as before.
\end{proof}

\section{Acknowledgements} 
I am grateful to  Kevin Sackel, Richard Hind and Vestislav
Apostolov for related discussions, as well as the referee
for nice suggestions.
\bibliographystyle{siam}  
%  \bibliography{/root/texmf/bibtex/bib/link}  
 % \bibliography{/home/yashasavelyev/texmf/bibtex/bib/link} 
\bibliography{/Users/yasha/texmf/bibtex/bib/link} 
% \bibliography{/home/yasha/texmf/bibtex/bib/link} 
\end {document}